\DeclareFontFamily{OT1}{rsfs}{}
\DeclareFontShape{OT1}{rsfs}{n}{it}{<-> rsfs10}{}
\DeclareMathAlphabet{\mathscr}{OT1}{rsfs}{n}{it}
\newtheorem{thm}{Theorem}
\newtheorem{cor}[thm]{Corollary}
\newtheorem{lem}[thm]{Lemma}
\newtheorem{prop}[thm]{Proposition}
\newenvironment{proof}{\noindent {\bf Proof:}}{$\Box$ \vspace{2 ex}}
\def\Z{{\mathbb Z}}
\def\Q{{\mathbb Q}}
\def\R{{\mathbb R}}
\def\P{{\mathbb P}}
\def\F{{\mathbb F}}
\DeclareMathOperator{\nr}{nr}
\DeclareMathOperator{\GL}{GL}
\DeclareMathOperator{\PGL}{PGL}
\def\<#1>{\left<#1\right>}
\newenvironment{ProofOf}[1]{\par\noindent{\bf Proof of #1:}}%
                       {\hspace*{\fill}\nobreak$\Box$\par\medskip}
\title{The proportion of plane cubic curves
  over $\Q$ that everywhere locally have a point}
\author{Manjul Bhargava, John Cremona, and Tom Fisher}
\begin{document}

\maketitle

\begin{abstract}
  We show that the proportion of plane cubic curves over $\Q_p$ that
  have a $\Q_p$-rational point is a rational function in $p$, where
  the rational function is independent of $p$, and we determine this rational function
  explicitly.  As a consequence, we
  obtain the density of plane cubic curves over~$\Q$ that have
  points everywhere locally; numerically, this density is shown to be
  $\approx 97.3\%$.
\end{abstract}

\section{Introduction} 
\label{sec:intro}

Any plane cubic curve over $\Q$ may be defined by the vanishing of a
ternary cubic form
\begin{equation}\label{Cdef}
  C(X,Y,Z) = aX^3+bX^2Y+cX^2Z+dXY^2+eXYZ+fXZ^2+gY^3+hY^2Z+iYZ^2+jZ^3
\end{equation}
where all coefficients $a,\ldots,j$ lie in $\Z$.  We say that a
ternary cubic form $C$ is {\it everywhere locally soluble} if 
it has a nontrivial
zero over every completion of $\Q$, i.e., if the corresponding plane
cubic curve has a point everywhere locally.  In this paper, we wish to
determine the probability that a random such integral ternary cubic
form is everywhere locally soluble.

More precisely, define the height $h(C)$ of~the cubic form $C$
in~(\ref{Cdef}) by $h(C):= \max\{|a|,\ldots,|j|\}.$ Then, as in the
work of Poonen and Voloch~\cite{PV}, we define the probability that a
random plane cubic curve over $\Q$ has a point everywhere locally
(equivalently, the probability that a random integral ternary cubic
form is everywhere locally soluble) by
\begin{equation*} 
\rho = \lim_{B\to\infty} \frac{\#\{C(X,Y,Z): C \mbox{ is everywhere locally
    soluble and } h(C)<B  \}}
{\#\{C(X,Y,Z): h(C)<B  \} }.
\end{equation*}
It is proven in~\cite[Thm.\ 3.6]{PV}, using the sieve of
Ekedahl~\cite{Ek}, that this limit exists and is given by
\[ \rho=\prod_p \rho(p), \] 
where the product is over all primes $p$; here $\rho(p)$ denotes the
probability (with respect to the usual additive
$\Z_p$-measure) that a random ternary cubic form over $\Z_p$ is {\it soluble} over $\Q_p$, i.e., has a
nontrivial zero over~$\Q_p$. There is no contribution from the infinite place
because a plane cubic curve over $\R$ always has real points.

The purpose of this paper is to determine the probabilities $\rho(p)$
for all primes $p$, and thus $\rho$, explicitly.  Specifically, we will prove
that $\rho(p)$
is a rational function in $p$, where the rational function is
independent of $p$:
\begin{thm}\label{mainlocal}
The probability that a random plane cubic curve over $\Q_p$ has a
$\Q_p$-rational point is given by
\[ \rho(p) = 1-\frac{f(p)}{g(p)} \]
where $f$ and $g$ are the following integer coefficient polynomials of 
degrees~$9$ and~$12$:
\begin{align*}
f(p) &= p^9 - p^8 + p^6 - p^4 + p^3 + p^2 - 2 p + 1, \\[.05in]
g(p) &= 3 (p^2+1) (p^4+1) (p^6+p^3+1).
\end{align*}
\end{thm}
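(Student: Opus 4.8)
The plan is to compute the complementary probability $1-\rho(p)$, namely the normalized Haar measure of the set of integral ternary cubics $C$ having no $\Q_p$-point, and to show it equals $f(p)/g(p)$. The starting point is a reduction step: viewing $\{C=0\}$ as a subscheme of $\P^2_{\Z_p}$, properness shows that a $\Q_p$-point must reduce to an $\F_p$-point of $\overline C$, while Hensel's lemma shows conversely that a \emph{smooth} $\F_p$-point of $\overline C$ lifts to a $\Q_p$-point. Since a smooth plane cubic over $\F_p$ is a genus-one curve and hence has an $\F_p$-point (e.g.\ by the Hasse bound), any $C$ with smooth reduction is soluble. Thus insolubility forces $\overline C$ to be singular with no smooth $\F_p$-point, and the first task is to enumerate the plane cubics over $\F_p$ with this property.

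First I would classify these degenerate reductions by running through the factorization types over $\overline{\F}_p$. A geometrically irreducible cubic is smooth, nodal, or cuspidal, and in the latter two cases its smooth locus is a form of $\mathbb{G}_m$ or $\mathbb{G}_a$ and carries an $\F_p$-point; a reduced reducible cubic with any rational line component carries smooth $\F_p$-points on that line, as does a double-line-plus-line configuration. Eliminating all of these leaves exactly three strata: (A) three Galois-conjugate lines with no common point, which has no $\F_p$-point at all and so is unconditionally insoluble; (B) three conjugate lines through a common rational point, i.e.\ $\overline C=F(X,Y)$ in suitable coordinates with $F$ an irreducible binary cubic; and (C) a triple line. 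I would then compute the $\Z_p$-volumes $m_A,m_B,m_C$ of these strata by counting the corresponding $\F_p$-forms — using that irreducible binary cubics correspond to the degree-three closed points of $\P^1_{\F_p}$, of which there are $(p^3-p)/3$ — and dividing by $p^{10}$, normalizing the coordinate choices by the relevant $\GL_3$-orbit volumes.

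The substantive part is a descent in cases (B) and (C). Here a $\Q_p$-point must reduce into the singular locus, so I would rescale the coordinates transverse to that locus: $\diag(p,p,1)$ at the triple point in case (B), and $\diag(p,1,1)$ at the triple line in case (C). After clearing a common factor of $p$ this produces a new integral cubic $C^{(1)}$, soluble if and only if $C$ is, whose reduction is now controlled by a \emph{fresh} binary cubic built from the coefficients that were previously divisible by $p$. Carrying out the substitutions shows that case (B) passes to case (C) (or descends a further level), while case (C) either breaks out to a soluble form the instant this fresh binary cubic acquires a simple rational root, returns to case (B) when that binary cubic is irreducible, stays in case (C) when it is a triple line, or descends one further level when all coefficients become divisible by $p$ — returning to the original problem scaled by $p$, and so contributing a factor $p^{-10}$ times the total measure. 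A cubic in stratum (B) or (C) is insoluble precisely when this descent never breaks out, so its measure is the limit of the resulting geometric series; together with $m_A$ this yields a closed system of linear relations among $1-\rho(p)$ and the conditional insolubility probabilities attached to strata (B) and (C), whose solution is a rational function of $p$.

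The main obstacle is the exact bookkeeping in this descent. After each rescaling the coefficients of $C^{(1)}$ are no longer uniformly distributed but are constrained to prescribed powers of $p\Z_p$, so these conditional distributions must be propagated through every level and shown to close up consistently with the stratum volumes; the interlocking of the (B) and (C) descents, together with the deeper-level self-similarity, is what makes the computation delicate. Once the system is set up correctly and solved, the weight $12$ of the discriminant (a degree-$12$ invariant of ternary cubics) surfaces as the degree of the denominator $g$, while the $\F_p$-point counts governing the strata produce the factors $p^2+1$, $p^4+1$, and $p^6+p^3+1$, yielding $1-\rho(p)=f(p)/g(p)$ exactly.
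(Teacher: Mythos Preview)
Your overall strategy matches the paper's: classify reductions with no smooth $\F_p$-point as triangle, star (your (A), (B)) or triple line (your (C)); scale coordinates into the singular locus; derive linear recursions among the conditional solubility probabilities; solve. The identification of the three bad strata and the substitutions $\diag(1,p,p)$, $\diag(p,1,1)$ are exactly what the paper uses.

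Where your sketch diverges is in the claimed shape of the recursion. You describe a closed (B)$\leftrightarrow$(C) loop, but after each rescaling one does not land in a \emph{generic} element of the other stratum, only in a heavily constrained sub-stratum --- and you cannot equate the conditional probability there with the generic one. Concretely, your case ``(C) returns to (B) when the fresh binary cubic is irreducible'' carries the extra constraints $v(c_0)=2$, $v(c_1)\ge 2$, and this sub-case is in fact \emph{unconditionally insoluble} (it is the type~IV$^*$ obstruction), not a recursive call to the generic star probability. Likewise ``(C) stays in (C) on a triple root'' is not the generic triple-line case but a refined one that requires its own pair of auxiliary probabilities (the paper's $\nu_1,\nu_2$, themselves linked by a separate $2\times 2$ recursion). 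The paper organizes the bookkeeping differently: rather than (B)$\leftrightarrow$(C), it introduces a ``line condition'' (restriction to a fixed line is irreducible mod~$p$) and a ``point condition'' ($X^3$-coefficient a unit) as the targets to which the star and triple-line descents eventually return after dividing out all powers of~$p$. A pleasant simplification you would otherwise miss is that the star case and the line condition form a closed $2\times 2$ system \emph{by themselves} --- a triple line can never satisfy the line condition, so $\beta_2'=0$ --- hence $\alpha_1$ is solved first and only then fed into the triple-line/point-condition system (augmented by $\nu_1,\nu_2$). Your proposal would succeed once you track enough sub-strata, but the two-variable (B)/(C) loop as written will not close.
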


Note that
\[
    1-\rho(p) =\frac{f(p)}{g(p)} 
                  \sim \frac{1}{3p^3},
\]
so $\rho(p)\to1$ rapidly as $p\to\infty$; for small~$p$, we have 
$\rho(2) \approx 0.98319$, $\rho(3) \approx 0.99259$, $\rho(5) \approx 
0.99795$, and $\rho(7) \approx 0.99918$.  



{}From Theorem~\ref{mainlocal}, we conclude:
\begin{thm}\label{main}
The probability that a random plane cubic curve over $\Q$ has a point
locally everywhere is given by
\[\rho =\prod_p \rho(p) = \prod_p \left(1-\frac{f(p)}{g(p)}\right).\]
\end{thm}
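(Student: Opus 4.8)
The plan is to obtain Theorem~\ref{main} as an immediate consequence of Theorem~\ref{mainlocal} together with the global-to-local factorization already recorded above. First I would invoke the result of Poonen and Voloch~\cite[Thm.~3.6]{PV}, proved via Ekedahl's sieve~\cite{Ek}, which guarantees both that the limit defining $\rho$ exists and that it factors as the Euler product $\rho = \prod_p \rho(p)$ over all finite primes, with no contribution from the archimedean place since every real plane cubic carries a real point. Then I would simply substitute the explicit local density $\rho(p) = 1 - f(p)/g(p)$ supplied by Theorem~\ref{mainlocal}, giving
\[
  \rho = \prod_p \rho(p) = \prod_p \left(1 - \frac{f(p)}{g(p)}\right),
\]
which is precisely the assertion of the theorem.

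The single point that genuinely requires attention is the convergence of this infinite product, so that the right-hand side names a well-defined positive real number. From the explicit polynomials one computes
\[
  1 - \rho(p) = \frac{f(p)}{g(p)} = \frac{1}{3p^3} + O(p^{-4})
  \qquad (p \to \infty),
\]
so that $\sum_p \bigl(1 - \rho(p)\bigr)$ is dominated by a constant multiple of $\sum_p p^{-3} \le \zeta(3) < \infty$. Hence $\sum_p \log \rho(p)$ converges absolutely and $\prod_p \rho(p)$ converges to a nonzero limit. This is of course already implicit in~\cite{PV}, but verifying it directly also confirms that the product of rational functions $1 - f(p)/g(p)$ is meaningful on its own terms.

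I do not expect a serious obstacle here: essentially all of the difficulty of the paper is concentrated in the local computation of Theorem~\ref{mainlocal} that produces $f/g$, while the passage from local to global is handled entirely by the machinery of~\cite{PV}. The only remaining labor is the numerical extraction of the value $\rho \approx 0.973$ quoted in the abstract. For that I would evaluate the first several factors $\rho(2), \rho(3), \rho(5), \dots$ exactly from Theorem~\ref{mainlocal} and bound the tail using the estimate $1 - \rho(p) \le C p^{-3}$ above; alternatively one may expand
\[
  -\log \prod_p \rho(p) = \sum_p \sum_{k \ge 1} \frac{1}{k}\left(\frac{f(p)}{g(p)}\right)^k
\]
and sum the resulting rapidly convergent prime-zeta series. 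Either route pins $\rho$ down to the stated precision.
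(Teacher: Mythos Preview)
Your proposal is correct and matches the paper's approach exactly: the paper presents Theorem~\ref{main} simply as an immediate consequence of Theorem~\ref{mainlocal} together with the Poonen--Voloch product formula $\rho=\prod_p\rho(p)$ already quoted in the introduction, with no further argument given. Your additional remarks on convergence and the numerical evaluation are more than the paper itself provides, but are entirely in line with what is stated there.
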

Numerically, we have 
$\rho \approx 97.256\%$.


The feature that $\rho(p)$ is a rational function of $p$ independent
of $p$ is special and does not always occur in other contexts.  For
example, for the genus one models $y^2=f(x,z)$, where $f$ is a binary
quartic form over $\Z$ (or more generally, a binary form of degree
$2g+2$ yielding a hyperelliptic curve of genus $g$), we show in
\cite{BCF3} that the analogue of $\rho(p)$ cannot be any fixed
rational function of $p$.

Our method for proving Theorem~\ref{mainlocal} is based on that for testing
solubility of a smooth plane cubic over $\Z_p$ (equivalently, $\Q_p$) as described, for
example, in \cite[Section 2]{FisherSills}; the arguments are
 also reminiscent of those used for minimising ternary cubics, as in
\cite{CFS}, and of those used to determine the density of isotropic integral
quadratic forms, as in \cite{BCF}.  Namely,
we consider the reductions modulo~$p$; cubics whose reductions have
smooth $\F_p$-points are soluble by Hensel's Lemma, while those that have
no $\F_p$-points are insoluble.  To determine the probabilities of solubility in the more difficult remaining cases, we develop certain recursive formulae, involving these and other suitable related quantities, that allow us to solve and obtain exact algebraic expressions for the desired probabilities.

We remark that the analogue of Theorem~\ref{mainlocal} holds (with the same proof)
over any finite extension of $\Q_p$: we simply replace $p$ by a uniformiser
when making substitutions in the proofs, and replace $p$ by the order
of the residue field when computing probabilities. The analogue of Theorem~\ref{main} then also holds with any number field in place of $\Q$ (where we define the relevant probability following \cite[\S4]{PV}).

This paper is organized as follows.  
In Section~\ref{sec:notpre} we provide counts for some polynomials and forms over 
finite fields that will be required in the proof of Theorem~\ref{mainlocal}. We then prove Theorem~\ref{mainlocal} in
Section~\ref{sec:thm12pf}, using the recursive methods described above.  Finally, in Section~\ref{sec:unr}, we give an extension of Theorem~\ref{mainlocal} where we determine the probability that a random plane cubic curve over $\Q_p$ has
a point {over {some} unramified extension of $\Q_p$}. We find that the answer takes a particularly
simple form in this case, and we outline the necessary changes required for the proof.

\section{Some counting over finite fields} \label{sec:notpre}

We work over the finite field $\F_q$ with $q$ elements, where $q$ is a
prime power.

\subsection{Cubic polynomials and binary cubic forms over $\F_q$.}

\begin{lem}\label{lem:cubic-count}
Of the $q^3$ monic cubics $g\in\F_q[X]$,
\begin{itemize}
\item $q^2(q-1)$ have distinct roots, of which
\begin{itemize}
\item[$\cdot$] $\frac{1}{6}q(q-1)(q-2)$ have distinct roots in~$\F_q$;
\item[$\cdot$] $\frac{1}{2}q^2(q-1)$ have one root in~$\F_q$ and two
  conjugate roots in~$\F_{q^2}$;
\item[$\cdot$] $\frac{1}{3}q(q^2-1)$ have three conjugate roots in~$\F_{q^3}$;
\end{itemize}
\item $q(q-1)$ have a simple root and a double root (both in $\F_q$);
\item $q$ have a triple root (in $\F_q$);
\end{itemize}
\end{lem}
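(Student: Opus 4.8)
The plan is to count monic cubics $g(X) = X^3 + pX^2 + qX + r \in \F_q[X]$ by partitioning them according to their factorization type over $\F_q$, which is governed by the multiset of roots in $\overline{\F_q}$. Every monic cubic falls into exactly one of five classes: three distinct roots in $\F_q$; one $\F_q$-root together with a conjugate pair in $\F_{q^2}$; a conjugate triple in $\F_{q^3}$; a simple root and a double root (necessarily in $\F_q$); or a triple root (in $\F_q$). These classes are mutually exclusive and exhaustive, so the counts must sum to $q^3$, which provides a useful arithmetic check at the end.

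First I would handle the separable (distinct-root) cases by counting unordered root configurations, since a monic polynomial is determined by its multiset of roots. For three distinct roots in $\F_q$, I would choose an unordered triple from the $q$ elements, giving $\binom{q}{3} = \frac{1}{6}q(q-1)(q-2)$. For the split case with one rational root and a conjugate pair, I would choose the rational root in $q$ ways and then choose an unordered conjugate pair $\{\alpha, \alpha^q\}$ with $\alpha \in \F_{q^2} \setminus \F_q$; the number of such pairs is $\frac{1}{2}(q^2 - q)$, so this case contributes $q \cdot \frac{1}{2}(q^2-q) = \frac{1}{2}q^2(q-1)$. For the irreducible case with a conjugate triple, I would count Galois orbits $\{\alpha, \alpha^q, \alpha^{q^2}\}$ with $\alpha \in \F_{q^3} \setminus \F_q$; there are $q^3 - q$ such elements, each orbit has size $3$ (the stabilizer is trivial since $\alpha \notin \F_q$), giving $\frac{1}{3}(q^3 - q) = \frac{1}{3}q(q^2-1)$ irreducible cubics, each corresponding to exactly one monic polynomial.

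Next I would treat the inseparable-root (repeated-root) cases. A simple root together with a distinct double root is specified by an ordered choice of two distinct elements of $\F_q$ (one for the simple root, one for the double root), giving $q(q-1)$ such cubics of the form $(X-\alpha)(X-\beta)^2$ with $\alpha \neq \beta$. A triple root is specified by a single element of $\F_q$, giving $q$ cubics $(X - \alpha)^3$. These counts match the claimed values directly.

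The main obstacle, though it is minor, is simply bookkeeping to ensure the five counts are disjoint and complete: I must verify that the separable cases genuinely have distinct roots (equivalently nonzero discriminant) and that no configuration is double-counted across classes. The cleanest confirmation is the total: summing the three distinct-root subcounts gives $q^2(q-1)$, and adding $q(q-1) + q$ yields
\[
  q^2(q-1) + q(q-1) + q = q^3 - q^2 + q^2 - q + q = q^3,
\]
accounting for all monic cubics, which completes the proof.
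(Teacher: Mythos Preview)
Your argument is correct: partitioning monic cubics by factorization type and counting each type directly is exactly the right approach, and your arithmetic checks out (including the total $q^3$). The paper in fact states this lemma without proof, treating it as elementary, so there is no alternative argument to compare against. One small notational slip: you write the generic cubic as $X^3 + pX^2 + qX + r$, which collides with the use of $q$ for the size of the field; just rename those coefficients.
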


\begin{cor}\label{cor:sigma1tau1}
  The probability that a random monic cubic over~$\F_q$ has a simple
  root in~$\F_q$ is $\sigma_1 = \frac{2}{3}(q^2-1)/q^2$, and the
  probability that it has a triple root is $\tau_1=1/q^2$.
\end{cor}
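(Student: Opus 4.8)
The plan is to derive the corollary directly from the counts in Lemma~\ref{lem:cubic-count}, treating the probabilities as ratios of the relevant counts to the total number $q^3$ of monic cubics. The key observation is to sort out which of the cases in the lemma actually produce a \emph{simple} root in $\F_q$. A monic cubic has a simple root in $\F_q$ precisely when it either has three distinct roots in $\F_q$, or has one root in $\F_q$ together with two conjugate roots in $\F_{q^2}$ (in the latter case the single $\F_q$-root is automatically simple), or has a simple root and a double root both in $\F_q$. By contrast, the cubics with three conjugate roots in $\F_{q^3}$ have no $\F_q$-root at all, and those with a triple root have an $\F_q$-root that is not simple; both of these must be excluded.

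First I would add up the three contributing counts from the lemma, namely
\begin{equation*}
\tfrac{1}{6}q(q-1)(q-2) + \tfrac{1}{2}q^2(q-1) + q(q-1),
\end{equation*}
and divide by $q^3$ to obtain $\sigma_1$. Carrying out the routine algebra, the numerator factors as $\tfrac{2}{3}q(q-1)(q+1) = \tfrac{2}{3}q(q^2-1)$, so that $\sigma_1 = \tfrac{2}{3}(q^2-1)/q^2$, as claimed. For the triple-root probability $\tau_1$, I would simply read off the count $q$ of monic cubics with a triple root directly from the last bullet of the lemma and divide by $q^3$, giving $\tau_1 = q/q^3 = 1/q^2$.

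I do not anticipate any genuine obstacle here: the entire content is bookkeeping over the mutually exclusive and exhaustive cases tabulated in Lemma~\ref{lem:cubic-count}. The only point requiring a moment's care is the correct classification of ``simple root in $\F_q$,'' where one must remember to include the case of one $\F_q$-root with two conjugate $\F_{q^2}$-roots (the $\F_q$-root there is simple) while excluding the triple-root case (whose $\F_q$-root is not simple) and the fully inert case (which has no $\F_q$-root). Once the right cases are selected, the result follows immediately from the stated counts.
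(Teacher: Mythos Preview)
Your proposal is correct and is exactly the intended derivation: the corollary is stated immediately after Lemma~\ref{lem:cubic-count} with no separate proof, so summing the three relevant counts and dividing by $q^3$ is precisely the implicit argument. Your identification of which cases contribute a simple $\F_q$-root is accurate, and the arithmetic checks out.
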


\begin{lem}\label{lem:hom-cubic-count}
  Of the $q^4$ binary cubic forms $g\in\F_q[X,Y]$,
\begin{itemize}
\item $q(q^2-1)(q-1)$ have distinct roots, of which
\begin{itemize}
\item[$\cdot$] $\frac{1}{6}q(q^2-1)(q-1)$ have all three roots
  in~$\F_q$;
\item[$\cdot$] $\frac{1}{2}q(q^2-1)(q-1)$ have one root in~$\F_q$ and
  two conjugate roots in $\F_{q^2}$;
\item[$\cdot$] $\frac{1}{3}q(q^2-1)(q-1)$ have three conjugate roots
  in~$\F_{q^3}$;
\end{itemize}
\item $q(q^2-1)$ have a simple and a double root (both in $\F_q$);
\item $q^2-1$ have a triple root (in $\F_q$);
\item $1$ is the zero form. 
\end{itemize}
\end{lem}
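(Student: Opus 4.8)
The plan is to count binary cubic forms via their divisors of zeros on $\P^1$ rather than directly. A nonzero form $g \in \F_q[X,Y]$ of degree $3$ has a well-defined effective zero divisor $\operatorname{div}(g)$ of degree $3$, which is $\F_q$-rational; conversely every effective $\F_q$-rational divisor of degree $3$ on $\P^1$ arises in this way, from a form that is unique up to multiplication by $\F_q^\times$. Hence the nonzero forms fall into groups of exactly $q-1$ sharing a common zero divisor (this is just the identification $\Sym^3 \P^1 \cong \P^3$ on $\F_q$-points), and the single remaining form is the zero form. The problem therefore reduces to classifying effective $\F_q$-rational divisors of degree $3$ on $\P^1$ by their factorization type and multiplying each resulting count by $q-1$.

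First I would record the number of closed points of $\P^1_{\F_q}$ of each degree $d\le 3$, since an effective $\F_q$-rational divisor is exactly a nonnegative integer combination of closed points of total degree $3$, and a closed point of degree $d$ is a Frobenius orbit of size $d$ of geometric points. Using $\#\P^1(\F_{q^n}) = q^n+1$ together with M\"obius inversion, there are $q+1$ closed points of degree $1$, $\tfrac12(q^2-q)$ of degree $2$, and $\tfrac13(q^3-q)$ of degree $3$. The five nonzero types then correspond to the partitions of $3$ refined by these degrees: three distinct rational points ($1+1+1$, giving $\binom{q+1}{3}$ divisors), one rational point and one degree-$2$ point ($1+2$, giving $(q+1)\cdot\tfrac12(q^2-q)$), one degree-$3$ point ($3$, giving $\tfrac13(q^3-q)$), a doubled rational point together with a distinct simple rational point, and a tripled rational point. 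For the last two I would take care that the doubled and simple points play asymmetric roles, so the count is the number of ordered pairs of distinct rational points, $(q+1)q$, while the triple-root divisors simply number $q+1$.

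Finally I would multiply each of these five divisor counts by $q-1$ to pass from divisors to forms, recovering the stated values $\tfrac16 q(q^2-1)(q-1)$, $\tfrac12 q(q^2-1)(q-1)$, $\tfrac13 q(q^2-1)(q-1)$, $q(q^2-1)$, and $q^2-1$, and then append the zero form. As a consistency check the five divisor counts sum to $q^3+q^2+q+1 = \#\P^3(\F_q)$, equivalently the five form counts together with the zero form sum to $q^4$; one could also cross-check against Lemma~\ref{lem:cubic-count} by splitting off the forms with nonzero coefficient of $X^3$, which biject with the $(q-1)q^3$ scaled monic cubics and carry exactly the affine root types counted there, the remaining forms being those divisible by $Y$. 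I expect the only genuine pitfall to be the bookkeeping for the non-reduced divisors: getting the asymmetry of the simple-plus-double case right, and correctly excluding rational points when tallying the degree-$2$ and degree-$3$ closed points, since the reduced cases follow immediately from the closed-point counts.
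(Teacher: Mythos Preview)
Your argument is correct. The paper does not actually supply a proof of this lemma; it is stated as an elementary counting fact (alongside Lemma~\ref{lem:cubic-count}) and used without further justification. Your divisor-counting approach on $\P^1$---enumerating closed points of degrees $1,2,3$, listing the possible effective degree-$3$ divisors, and multiplying by $q-1$---is a clean and standard way to fill this in, and your consistency check against $\#\P^3(\F_q)$ confirms the bookkeeping.
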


\begin{cor}\label{cor:sigmatau}
  The probability that a random binary cubic form over~$\F_q$ has a
  simple root in~$\P^1(\F_q)$ is $\sigma =
  \frac{1}{3}(q^2-1)(2q+1)/q^3$, and the probability that it has a
  triple root is $\tau=(q^2-1)/q^4$.
\end{cor}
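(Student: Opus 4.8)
The plan is to prove Corollary~\ref{cor:sigmatau} directly from the counts in Lemma~\ref{lem:hom-cubic-count}, just as Corollary~\ref{cor:sigma1tau1} follows from Lemma~\ref{lem:cubic-count}. The probabilities in question are taken with respect to the uniform measure on the set of $q^4$ binary cubic forms over $\F_q$, so each quantity is simply a count from the lemma divided by $q^4$.

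First I would handle $\tau$. A binary cubic form has a triple root in $\P^1(\F_q)$ exactly when it is a nonzero scalar multiple of the cube of a linear form; the lemma records that there are $q^2-1$ such forms. Dividing by the total $q^4$ gives $\tau = (q^2-1)/q^4$ immediately, with no further work.

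Next I would compute $\sigma$, the probability of having a simple root in $\P^1(\F_q)$. The subtlety here is that ``simple root in $\P^1(\F_q)$'' must be assembled from several of the bulleted cases, since a form can have an $\F_q$-rational simple root while its other roots lie elsewhere. I would therefore sum the counts of exactly those forms that contribute at least one simple $\F_q$-rational root: from the distinct-roots family, the $\frac16 q(q^2-1)(q-1)$ forms with all three roots in $\F_q$ and the $\frac12 q(q^2-1)(q-1)$ forms with one $\F_q$-root and two conjugate roots in $\F_{q^2}$; and from the repeated-root cases, the $q(q^2-1)$ forms with a simple root and a double root. The forms with three conjugate $\F_{q^3}$-roots, a triple root, or the zero form contribute no simple $\F_q$-rational root and are excluded. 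Adding these three counts and dividing by $q^4$ should collapse, after factoring out $q(q^2-1)$, to $\tfrac13(q^2-1)(2q+1)/q^3$.

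The only real obstacle is bookkeeping: making sure each bulleted case is assigned correctly to ``has a simple $\F_q$-root'' versus ``does not,'' and confirming the arithmetic simplification of the sum $\frac16 q(q^2-1)(q-1) + \frac12 q(q^2-1)(q-1) + q(q^2-1)$ to the stated closed form. Since all three relevant terms share the factor $q(q^2-1)$, pulling it out reduces the check to verifying $\frac16(q-1)+\frac12(q-1)+1 = \frac{2q+1}{6}$, which is a one-line computation. There is no genuine difficulty beyond this verification, as the heavy lifting was already done in the enumeration of Lemma~\ref{lem:hom-cubic-count}.
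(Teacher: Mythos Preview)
Your approach is exactly the intended one: the paper states this corollary without proof, as an immediate consequence of the counts in Lemma~\ref{lem:hom-cubic-count}, and your case analysis and summation are precisely how one extracts $\sigma$ and $\tau$ from that lemma.

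One arithmetic slip to fix: the bracketed sum is
\[
\tfrac16(q-1)+\tfrac12(q-1)+1 \;=\; \frac{4q+2}{6} \;=\; \frac{2q+1}{3},
\]
not $\tfrac{2q+1}{6}$; with the correct value you indeed get $q(q^2-1)\cdot\tfrac{2q+1}{3}$ forms and hence $\sigma=\tfrac13(q^2-1)(2q+1)/q^3$ after dividing by $q^4$.
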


\subsection{Ternary cubic forms over $\F_q$.}

We count the numbers of ternary cubic forms over~$\F_q$, separating
out all the different possible cases: smooth, irreducible but
singular, or reducible in various ways.

We first count forms up to scaling by elements of $\F_q^\times$; these
counts are then multiplied by $q-1$, and after adding~$1$ for the
zero form, the counts for cubics add up to~$q^{10}$.

\subsubsection{Lines} The number of lines in $\P^2$ over~$\F_{q^k}$
is~$n_1^{(k)}=(q^{3k}-1)/(q^k-1)$.  We write $n_1=n_1^{(1)}=q^2+q+1$.

\subsubsection{Conics} The total number of conics~$Q$ over~$\F_q$
is~$(q^6-1)/(q-1)$.  Reducible conics are of three types, up to
scaling: $Q=L^2$ with $L$ a line over~$\F_q$, or $Q=L_1L_2$ with lines
$L_j$ either both defined over~$\F_q$ or conjugate over $\F_{q^2}$.
The counts for these types are, up to scaling;
\begin{itemize}
\item $\#\{Q=L^2\}= n_1 = q^2+q+1$.
\item $\#\{Q=L_1L_2\;\text{  over~$\F_q$}\} = n_1(n_1-1)/2 =
  \frac{1}{2}q(q+1)(q^2+q+1)$.
\item $\#\{Q=L_1L_2\;\text{  conjugate over~$\F_{q^2}$}\} =
  (n_1^{(2)}-n_1)/2 = \frac{1}{2}q(q-1)(q^2+q+1)$.
\end{itemize}
Summing, we find that the number of absolutely reducible conics
is~$(q^2+1)(q^2+q+1)$.  Hence the number of absolutely irreducible
conics is~$q^5-q^2 = q^2(q-1)(q^2+q+1)$, up to scaling.

\subsubsection{Plane cubic curves}
The number of plane cubic curves over~$\F_q$, up to scaling,
is~$(q^{10}-1)/(q-1)$.

Reducible cubics are of the form $C=L^3$, $C=L_1^2L_2$,
or~$C=L_1L_2L_3$ (with all lines~$L_j$ defined over $\F_q$) or
$C=L_1L_2L_3$ (three lines conjugate over $\F_{q^3}$), or $C=LL_1L_2$
(with $L$ over $\F_{q}$ and $L_1,L_2$ conjugate over $\F_{q^2}$), or
$C=LQ$ with $Q$ an absolutely irreducible conic.
The counts for these are, up to scaling:
\begin{itemize}
\item $\#\{C=L^3\} = n_1  = q^2+q+1$.
\item $\#\{C=L_1L_2^2\} = n_1(n_1-1) = q(q+1)(q^2+q+1)$.
\item $\#\{C=L_1L_2L_3\;\text{  over~$\F_q$}\} = n_1(n_1-1)(n_1-2)/6
  = \frac{1}{6}q(q+1)(q^2+q-1)(q^2+q+1)$.
\item $\#\{C=L_1L_2L_3\;\text{  conjugate over~$\F_{q^3}$}\} =
  (n_1^{(3)}-n_1)/3 = \frac{1}{3}q(q^2-1)(q^3+q+1)$.

  Of these, we will later need to know the number that are concurrent
  (forming a ``star'') and the number that are not (forming a
  ``triangle''), which are 
\begin{itemize}
\item (star) $\frac{1}{3}q(q^2-1)(q^2+q+1)$;
\item (triangle) $\frac{1}{3}q^3(q^2-1)(q-1)$.
\end{itemize}
\item $\#\{C=LL_1L_2\;\text{  conjugate over~$\F_{q^2}$}\} =
  n_1(n_1^{(2)}-n_1)/2 = \frac{1}{2}q(q-1)(q^2+q+1)^2$.
\item $\#\{C=LQ\} = n_1(q^5-q^2) = q^2(q-1)(q^2+q+1)^2$.
\end{itemize}
Adding, we find that the number of absolutely reducible cubics is $(q
+ 1) (q^6 + q^5 + q^4 + q^2 + 1)$, and hence the number of absolutely
irreducible cubics is
\[
   q^5(q+1)(q-1)(q^2 + q + 1) = N q^2/(q-1)
\]
where $N = |\PGL(3,\F_q)|$.  Although it will not be needed for the
proof of Theorem~\ref{mainlocal}, we remark that $Nq$ of these are
smooth, with exactly $N$ for each of the $q$ possible $j$-invariants. Of
the remaining cubics, $N$ have a node and $N/(q-1)$ have a cusp.

\section{Proof of Theorem~\ref{mainlocal}}\label{sec:thm12pf}

In this section, we prove Theorem~\ref{mainlocal} giving the density
of plane cubic curves over~$\Q_p$ that have a $\Q_p$-rational point.

\subsection{Outline of the proof}

Let $C$ be a ternary cubic form with coefficients in~$\Z_p$ that is
{\it primitive}, meaning that not all of its coefficients are divisible
by~$p$.  We say that $C$ is {\em soluble} if there exist
$x,y,z\in\Q_p$, not all zero, such that $C(x,y,z)=0$, or in other
words, if the associated cubic curve has a $\Q_p$-rational point.

The reduction of~$C$ modulo~$p$ is a cubic curve~$\overline{C}$
over~$\F_p$.  If~$\overline{C}$ has no~$\F_p$-rational points then
certainly $C$ is not soluble.  Since lines, smooth conics, and
absolutely irreducible cubics over~$\F_p$ always have smooth points,
inspection of the cases enumerated in the previous section shows
that~$\overline{C}$ always has smooth $\F_p$-points (even when~$p=2$)
unless it is a product of three lines conjugate over~$\F_{p^3}$
or a triple line; and in the former case, if the three lines are not
concurrent then there are no $\F_p$-rational points at all.

It follows that $C$ is not soluble when the reduction~$\overline{C}$
consists of three non-concurrent lines conjugate over~$\F_{p^3}$ (the
{\it triangle} configuration); it may or may not be soluble
when~$\overline{C}$ is three concurrent lines conjugate
over~$\F_{p^3}$ (the {\it star} configuration) or a triple line; and in
all other cases $C$ is soluble.  It remains to determine the density
of soluble cubics whose reduction is either a star or a triple
line.  The density will not depend on which $\F_p$-rational point is
the common point in the first case, or which $\F_p$-line is the triple
line in the second, so in Sections~\ref{sec:alpha1}
and~\ref{sec:alpha2} we take them to be $P_0=[1:0:0]$ and $X=0$,
respectively.  We will then develop recursions in order to solve for the densities
of soluble cubics among those whose reductions lie in the star and triple line configurations.

\subsection{Preliminaries}

We now compute the probabilities of certain configurations occurring,
including the ones already mentioned and two others which will arise
in the course of the proof.

Let $\beta_1$ denote the probability of a ternary cubic over $\Z_p$ having a
reduction which is a star as defined above, $\beta_2$ the probability
of the reduction being a triple line, and $\beta_3$ the probability of
the reduction being a triangle, again as defined above.  Additionally,
let $\beta_4$ be the probability of the {\it line condition}, defined to
be the property that the reduction of the cubic meets the line $X=0$ in three
distinct points conjugate over~$\F_{p^3}$, and let $\beta_5$ be the
probability of the {\it point condition}, defined to be the property
that the reduction of the cubic does not contain the point $P_0=[1:0:0]$.

We will need to know the density of cubics over $\Z_p$ satisfying each of these
conditions, as well as the relative density of those satisfying the
star, triple line, and triangle conditions among those satisfying each
of the line and point conditions.  These relative densities will be
denoted by $\beta_j'$ and $\beta_j''$, respectively, for $j=1,2,3$.

\begin{prop} \label{prop:betas} The probabilities of a random plane
  cubic over~$\F_p$ satisfying each of these five conditions are as
  follows:
\begin{enumerate}
\item (star: all; relative to line condition; relative to point
  condition)
\[ \beta_1  = \frac{(p^2-1)(p^3-1)}{3p^9};\qquad
   \beta_1' = \frac{1}{p^4};\qquad
   \beta_1'' = \frac{(p+1)^2(p-1)}{3p^7};
\]
\item (triple line: all; relative to line condition; relative to point
  condition)
\[
   \beta_2 = \frac{p^3-1}{p^{10}};\qquad
   \beta_2'= 0;\qquad
   \beta_2'' = \frac{1}{p^{7}};
\]
\item (triangle: all; relative to line condition; relative to point
  condition)
\[
   \beta_3 = \frac{(p+1)(p-1)^3}{3p^7};\qquad
   \beta_3'= \frac{p-1}{p^4};\qquad
   \beta_3'' = \frac{(p+1)(p-1)^2}{3p^6};
\]
\item (line condition)
\[
   \beta_4 =  \frac{(p+1)(p-1)^2}{3p^3};
\]
\item (point condition)
\[
  \beta_5 = \frac{p-1}{p}.
\]
\end{enumerate}
\end{prop}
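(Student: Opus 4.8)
The plan is to compute each probability directly by counting configurations over $\F_p$ and dividing by the total number of cubics. The key observation is that all five conditions depend only on the reduction $\overline{C}$ modulo $p$, so each probability is a ratio of the form $(\text{number of favorable cubics over }\F_p)/p^{10}$, where $p^{10}$ counts all ternary cubic forms over $\F_p$ (including the zero form). Since the counting lemmas in Section~\ref{sec:notpre} enumerate configurations \emph{up to scaling}, I would multiply each such count by $(p-1)$ to recover the number of genuine forms, setting $q=p$ throughout.

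For the absolute probabilities $\beta_1,\beta_2,\beta_3$, the work is essentially already done. The star count is $\frac{1}{3}q(q^2-1)(q^2+q+1)$ up to scaling, the triple-line count is $q^2-1$ up to scaling, and the triangle count is $\frac{1}{3}q^3(q^2-1)(q-1)$ up to scaling; multiplying each by $(p-1)$ and dividing by $p^{10}$ should reproduce the stated formulas after simplification (using $q^2+q+1=(q^3-1)/(q-1)$ for $\beta_1$). For $\beta_4$, the line condition, I would count binary cubic forms with three roots conjugate over $\F_{p^3}$: restricting $\overline{C}$ to the line $X=0$ gives a binary cubic form in $Y,Z$, and by Lemma~\ref{lem:hom-cubic-count} exactly $\frac{1}{3}q(q^2-1)(q-1)$ of the $q^4$ such forms have this property. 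The remaining coefficients of $\overline{C}$ (those involving $X$) are free, contributing an extra factor; the probability is then the ratio of forms with the $\F_{p^3}$-conjugate restriction to all forms with that restriction pattern. For $\beta_5$, the point condition is simply that $\overline{C}(1,0,0)=\overline{a}\neq 0$, which happens with probability $(p-1)/p$, giving the stated value immediately.

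The genuinely conditional quantities $\beta_j'$ and $\beta_j''$ require care, and this is where I expect the main obstacle. These are \emph{relative} densities: $\beta_j'$ conditions on the line condition ($\beta_4$) and $\beta_j''$ conditions on the point condition ($\beta_5$). I would compute each as $\beta_j' = (\text{prob. of config.\ }j \text{ and line condition})/\beta_4$ and similarly for $\beta_j''$. The subtlety is correctly identifying the joint events: for instance, if $\overline{C}$ is a star with common point $P_0$, I must check how its intersection with $X=0$ behaves, and whether that intersection can consist of three $\F_{p^3}$-conjugate points. For the triple line, $\beta_2'=0$ reflects that a triple line cannot meet $X=0$ in three \emph{distinct} conjugate points, while $\beta_2''=1/p^7$ counts triple lines avoiding $P_0$. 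I would parametrize each configuration explicitly (e.g., a star through $P_0$ is determined by three concurrent lines, one of which may be controlled) and count the sub-cases satisfying the conditioning event.

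The main obstacle will be \textbf{organizing the joint counts without double-counting or sign errors}, particularly the interaction between a geometric configuration (star/triangle/triple line) and the two analytic conditions (meeting $X=0$ in conjugate points; avoiding $P_0$). I would verify the computation by two consistency checks: first, that $\beta_1'+\beta_2'+\beta_3'$ and $\beta_1''+\beta_2''+\beta_3''$ are each at most $1$ and match the structure of the conditioning; and second, that $\beta_1 = \beta_4\,\beta_1' + (\text{contributions from non-line-condition stars})$ is consistent, using $\beta_j \cdot (\text{total}) = \beta_j' \cdot \beta_4 \cdot (\text{total}) + \cdots$ as a cross-check on the arithmetic. Each individual count is elementary given the finite-field lemmas already established; the care lies entirely in the bookkeeping.
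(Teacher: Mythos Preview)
Your approach is correct and is essentially the same as the paper's: count configurations over $\F_p$ using the lemmas of Section~\ref{sec:notpre}, multiply by $(p-1)/p^{10}$ for the absolute densities, and obtain the conditional densities $\beta_j',\beta_j''$ as joint probabilities divided by $\beta_4$ or $\beta_5$. The paper streamlines the joint counts with two geometric observations you have not yet isolated: a star satisfies the line (resp.\ point) condition precisely when its centre lies off the line $X=0$ (resp.\ is not $P_0$), reducing the count to a choice of centre; and a triangle, having no $\F_p$-points at all, automatically satisfies both the line and point conditions, so $\beta_3'=\beta_3/\beta_4$ and $\beta_3''=\beta_3/\beta_5$ with no further work.
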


\begin{proof}
  We refer to the previous section for the numbers of stars, triple
  lines and triangles, up to scaling.  Multiplying by $(p-1)/p^{10}$
  gives $\beta_j$ for $j=1,2,3$.
\begin{enumerate}
\item The probability of satisfying the star condition centred at a
  given point in $\P^2(\F_q)$ is
  \[ \gamma = \frac{1}{3}(p^3-p) \cdot \frac{p-1}{p^{10}}. \] We have
  already computed $\beta_1 = (p^2 + p + 1) \gamma$. The probability
  of satisfying both the line and star condition is $p^2 \gamma$.
  Dividing by~$\beta_4$ gives $\beta_1'$.  Similarly, the probability
  of satisfying both the point and star condition is $(p^2 + p)
  \gamma$
  which on dividing by~$\beta_5$ gives $\beta_1''$.

\item The triple line and line conditions cannot occur together, so
  $\beta_2'=0$.  The triple line and point conditions together have
  probability $p^2(p-1)/p^{10}$, and dividing by~$\beta_5$ gives
  $\beta_2''$.

\item Since the triangle condition implies both the line and point
  conditions, $\beta_3'=\beta_3/\beta_4 = (p-1)/p^4$, and
  $\beta_3''=\beta_3/\beta_5={(p+1)(p-1)^2}/{3p^6}$.

\item The line condition holds with the same probability that a binary
  cubic form over~$\Z_p$ is irreducible modulo~$p$, which is
\[
   \beta_4 = \frac{1}{3}(p^3-p) \cdot \frac{p-1}{p^4}.
\]

\item The point condition is equivalent to the condition that the
  coefficient of~$X^3$ in the cubic form is not divisible by~$p$, so
  occurs with probability~$\beta_5 = 1-1/p$.
\end{enumerate}
\end{proof}

For $1\le j\le5$, let $\alpha_j$ denote the probability of solubility
for ternary cubics whose reduction is a star, a triple line, a
triangle, or which satisfy the line or point conditions, respectively.
We know that $\alpha_3=0$.  In the next two subsections, we compute
$\alpha_1$, $\alpha_4$, $\alpha_2$, and $\alpha_5$.

\subsection{The star case: computation of $\alpha_1$ (together with $\alpha_4$)}
\label{sec:alpha1}

We derive two linear equations linking~$\alpha_1$ and~$\alpha_4$, from
which their values may be determined.

\begin{lem}\label{lem:lem8}
  $1-\alpha_4 = \beta_1'(1-\alpha_1)+\beta_2'(1-\alpha_2)+\beta_3'$.
  Hence $\alpha_4 = (p^4-p+\alpha_1)/p^4$.
\end{lem}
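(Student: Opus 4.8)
The plan is to establish the displayed identity by the law of total probability, decomposing the line-condition event according to the reduction type of $\overline{C}$, and then to substitute the values of $\beta_1'$, $\beta_2'$, $\beta_3'$ from Proposition~\ref{prop:betas} to read off $\alpha_4$.

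First I would recall from the outline that a primitive cubic over $\Z_p$ is soluble whenever its reduction $\overline{C}$ possesses a smooth $\F_p$-point, and that $\overline{C}$ fails to have such a point only when it is a triangle, a star, or a triple line. Consequently, among cubics satisfying the line condition, insolubility can occur only in these three configurations, so the conditional insolubility probability $1-\alpha_4$ splits as a sum of three contributions weighted by the relative densities $\beta_1'$, $\beta_2'$, $\beta_3'$. The triangle case is governed by $\alpha_3=0$: every triangle is insoluble, so its contribution is $\beta_3'(1-\alpha_3)=\beta_3'$. The triple-line case contributes nothing, since a triple line $M^3$ meets the fixed line $X=0$ either in a single point of multiplicity three (if $M\neq\{X=0\}$) or along the whole line (if $M=\{X=0\}$), never in three distinct points; thus $\beta_2'=0$, and I retain the term $\beta_2'(1-\alpha_2)$ only for bookkeeping. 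For the star case the line condition is exactly the requirement that the centre of the star lie off $X=0$, with conditional insolubility probability $1-\alpha_1$.

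The step needing the most care is the star case, where I must justify that conditioning the star configuration on the line condition leaves the solubility probability equal to $\alpha_1$. Here I would appeal to the density-independence already recorded, namely that the solubility probability for a star does not depend on which $\F_p$-point is its centre. Since the reference centre $P_0=[1:0:0]$ itself lies off $X=0$, the stars satisfying the line condition are precisely those whose centre lies off $X=0$, and by centre-independence each such centre contributes solubility probability $\alpha_1$; hence the conditional solubility probability within the star case is again $\alpha_1$. This is the main obstacle, but it is already sanctioned by the symmetry invoked when the common point and triple line were fixed to be $P_0$ and $X=0$. Assembling the three contributions gives
\[
  1-\alpha_4 = \beta_1'(1-\alpha_1) + \beta_2'(1-\alpha_2) + \beta_3'.
\]

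Finally, I would substitute $\beta_1'=1/p^4$, $\beta_2'=0$, and $\beta_3'=(p-1)/p^4$ to obtain $1-\alpha_4 = (1-\alpha_1)/p^4 + (p-1)/p^4 = (p-\alpha_1)/p^4$, whence $\alpha_4 = (p^4-p+\alpha_1)/p^4$, as asserted.
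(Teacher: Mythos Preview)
Your proof is correct and follows essentially the same approach as the paper: a total-probability decomposition of the insolubility event under the line condition into the star, triple-line, and triangle cases, followed by substitution of the $\beta_j'$ from Proposition~\ref{prop:betas}. You supply more detail than the paper (which dispatches the lemma in two sentences), in particular making explicit the centre-independence argument for why the star solubility probability remains $\alpha_1$ after conditioning on the line condition; this is implicit in the paper's setup.
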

\begin{proof}
  For the first equation, we combine the probablities of insolubility
  in the star, triple line, and triangle cases, the latter being~$1$.
  Using the values of~$\beta_j'$ from Proposition~\ref{prop:betas}
  allows us to solve for~$\alpha_4$ in terms of $\alpha_1$ alone,
  since the coefficient of~$\alpha_2$ is conveniently $\beta_2'=0$.
\end{proof}

\begin{lem}
       $\alpha_1 = (p^3-p+\alpha_4)/p^4$.
\end{lem}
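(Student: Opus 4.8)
The plan is to decide solubility by examining only those $\Q_p$-points of $C$ that reduce to the unique $\F_p$-point of its reduction, and to show that after stripping off successive factors of $p$ the deepest surviving case is governed by a cubic satisfying the line condition. First I would normalise the star: since $\overline{C}$ is a product of three lines through $P_0=[1:0:0]$ conjugate over $\F_{p^3}$, its equation involves only $Y,Z$, so the coefficients $a,\dots,f$ of $C$ in~(\ref{Cdef}) lie in $p\Z_p$, say $a=pa',\dots,f=pf'$, while the binary cubic $B(Y,Z)=gY^3+hY^2Z+iYZ^2+jZ^3$ has irreducible reduction $\overline{B}$. Because $\overline{B}$ is irreducible, the only $\F_p$-point of $\overline{C}$ is $P_0$, so every $\Q_p$-point of $C$ reduces to $P_0$ and can be written as $[1:p\eta:p\zeta]$ with $\eta,\zeta\in\Z_p$. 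Substituting and dividing by $p$ then shows that $C$ is soluble if and only if
\[ F(\eta,\zeta) := a' + p(b'\eta+c'\zeta) + p^2\bigl(d'\eta^2+e'\eta\zeta+f'\zeta^2+B(\eta,\zeta)\bigr) \]
has a zero in $\Z_p^2$. Under the conditioning defining a star at $P_0$, the quantities $a',\dots,f'$ are independent and uniform in $\Z_p$, and $(g,h,i,j)$ is uniform subject to $\overline{B}$ being irreducible.

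Next I would run a short case analysis on the leading coefficients. Reducing $F$ modulo $p$ gives $a'$, so a zero forces $a'\in p\Z_p$ (probability $1/p$); otherwise $C$ is insoluble. Writing $a'=pa''$ and dividing by $p$ leaves $a''+b'\eta+c'\zeta+p(\cdots)$, whose reduction is an affine linear form in $(\eta,\zeta)$. If $(b',c')\not\equiv(0,0)\pmod p$, which has probability $(p^2-1)/p^2$, this form has a nonsingular zero that lifts by Hensel's lemma, so $C$ is soluble. If instead $(b',c')\equiv(0,0)$, which has probability $1/p^2$, I would write $b'=pb''$, $c'=pc''$ and divide by $p$ again; the reduction is now $a''$, which forces $a''\in p\Z_p$ (probability $1/p$) on pain of insolubility, after which I set $a''=pa'''$ and divide once more.

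The key step is to identify the cubic surviving the deepest branch. After these reductions the equation becomes $G_2(\eta,\zeta)=a'''+b''\eta+c''\zeta+d'\eta^2+e'\eta\zeta+f'\zeta^2+B(\eta,\zeta)=0$, whose homogenisation $\widetilde{G}_2$ in variables $(\eta,\zeta,W)$ is a ternary cubic meeting the line $W=0$ in the irreducible binary cubic $\overline{B}$; thus $\widetilde{G}_2$ satisfies the line condition (with $W=0$ playing the role of $X=0$). Since $\overline{B}$ is irreducible, the reduction of $\widetilde{G}_2$ has no $\F_p$-points at infinity, so solubility of $\widetilde{G}_2$ is equivalent to the existence of a zero of $G_2$ in $\Z_p^2$, hence to solubility of $C$ in this branch. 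Crucially, the six coefficients $a''',b'',c'',d',e',f'$ are independent and uniform in $\Z_p$ while $B$ is uniform given $\overline{B}$ irreducible, so $\widetilde{G}_2$ has exactly the conditional distribution of a random cubic satisfying the line condition, and its solubility probability is $\alpha_4$. Multiplying the independent probabilities along the branches then gives
\[ \alpha_1 = \frac1p\Bigl(\frac{p^2-1}{p^2}+\frac1{p^2}\cdot\frac1p\,\alpha_4\Bigr) = \frac{p^3-p+\alpha_4}{p^4}, \]
as claimed.

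The main obstacle is the final identification: one must verify that after stripping the successive factors of $p$ the residual cubic $\widetilde{G}_2$ is distributed exactly as a uniform cubic conditioned on the line condition, so that $\alpha_4$ (and not some other density) appears, and that the correspondence between $\Q_p$-points of $C$ near $P_0$ and $\Z_p$-zeros of $F$, together with the absence of points at infinity on $\widetilde{G}_2$, makes each solubility equivalence exact. Once these two points are secured, the case bookkeeping and the final probability computation are routine.
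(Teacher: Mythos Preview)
Your proposal is correct and follows essentially the same approach as the paper: both normalise the star to have centre $P_0=[1:0:0]$, force $y,z\equiv 0\pmod p$, and then run the identical three-step case analysis (on $a'$, then on $(b',c')$, then on $a''$) before identifying the residual cubic as a uniform cubic satisfying the line condition. The only cosmetic difference is that you dehomogenise to affine coordinates $(\eta,\zeta)$ and rehomogenise at the end, whereas the paper works throughout with valuations of the homogeneous pieces $c_0,c_1,c_2,c_3$; your explicit check that $\widetilde{G}_2$ has no $\Q_p$-points at $W=0$ and has the correct conditional distribution makes explicit what the paper leaves implicit.
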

\begin{proof}
  Without loss of generality the centre of the star is at
  $P_0=[1:0:0]$, so the cubic has the form
\[
   C = c_0X^3 + c_1(Y,Z)X^2 + c_2(Y,Z)X + c_3(Y,Z)
\]
where each $c_j$ is a binary form of degree~$j$ and the
valuations\footnote{By the valuation of a form we mean the minimal
  valuation of its coefficients.} of the $c_j$ are
\[
     {}\ge1 \qquad {}\ge1 \qquad {}\ge1 \qquad {}=0
\]
with $c_3$ irreducible modulo~$p$.  Solubility of~$C$ means that there
exist~$x,y,z\in\Z_p$, not all in~$p\Z_p$, satisfying~$C(x,y,z)=0$.
Here, for any such solution, irreducibility of~$c_3$ modulo~$p$
implies that $y,z\equiv0\pmod{p}$.

If $v(c_0)=1$ (which has probability $1-1/p$), then $C$ is insoluble
since the first term has valuation~$1$ while the other terms have
valuation at least~$2$.  So we may assume that $v(c_0)\ge2$ (which has
probability $1/p$), substitute $pY, pZ$ for $Y,Z$, and divide by~$p^2$.
The valuations of the terms are now
\[
     {}\ge0 \qquad {}\ge0 \qquad {}\ge1 \qquad {}=1.
\]

If $v(c_1)=0$ (which has probability $1-1/p^2$ since~$c_1$ has two
coefficients), then $C\pmod{p}$ has a simple linear factor over~$\F_p$
and hence is soluble.  Otherwise, we have $v(c_1)\ge1$ (which has probability
$1/p^2$), and the valuations of the terms satisfy
\[
     {}\ge0 \qquad {}\ge1 \qquad {}\ge1 \qquad {}=1.
\]

We must have $x\not\equiv0\pmod{p}$ by primitivity, since we have
already forced $y\equiv z\equiv 0$ (in the original coordinates).
Hence for solubility we must have $c_0\equiv0\pmod{p}$; that is,
$v(c_0)\ge1$.  Assuming this, which has probability~$1/p$, we may
divide through by~$p$ to obtain valuations
\[
     {}\ge0 \qquad {}\ge0 \qquad {}\ge0 \qquad {}=0.
\]
Recalling that $c_3$ is irreducible modulo~$p$, we see that this is an
arbitrary cubic satisfying the line condition, so the probability of
solubility is~$\alpha_4$.

Tracing through the above steps, we see that
\[
  \alpha_1 = (1-1/p)\cdot0 +
  (1/p)\cdot\left((1-1/p^2)\cdot1 + (1/p^2)\cdot((1-1/p)\cdot0 +
  (1/p)\cdot\alpha_4)\right),
\]
which simplifies to the equation stated.
\end{proof}

Solving for~$\alpha_1$ now gives
\begin{prop}
\[
   \alpha_1 = \frac{p^7-p^5+p^4-p}{p^8-1}
            = \frac{p(p-1)(p^5 + p^4 + p^2 + p + 1)}{p^8-1}.
\]
\end{prop}

\subsection{The triple line case: computation of $\alpha_2$ (together with $\alpha_5$)}
\label{sec:alpha2}

Recall that $\alpha_2$ and $\alpha_5$ denote the probabilities of
solubility given the star and point conditions, respectively.  We derive
two equations linking these quantities (and $\alpha_1$), from which
they may be determined.

First, we have the analogue of Lemma~\ref{lem:lem8}:
\begin{lem}\label{lem:eq1}
$1-\alpha_5 = \beta_1''(1-\alpha_1) + \beta_2''(1-\alpha_2) + \beta_3''$.
\end{lem}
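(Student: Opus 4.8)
The plan is to mirror the decomposition already used in Lemma~\ref{lem:lem8}, but now conditioning on the \emph{point condition} rather than the line condition. The quantity $\alpha_5$ is the probability of solubility for a cubic satisfying the point condition, i.e.\ one whose reduction does not pass through $P_0 = [1:0:0]$. The key observation is that, among cubics satisfying the point condition, the only reductions that fail to guarantee solubility by Hensel's Lemma are exactly the star, triple line, and triangle configurations (centred at or involving $P_0$), since all other reductions possess a smooth $\F_p$-point. So I would write the complementary probability $1-\alpha_5$ as a sum over these three bad configurations, weighted by their densities \emph{relative to} the point condition, times their respective probabilities of insolubility.

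\textbf{Assembling the equation.} The relative densities are precisely the $\beta_j''$ computed in Proposition~\ref{prop:betas}: $\beta_1''$ for the star, $\beta_2''$ for the triple line, $\beta_3''$ for the triangle. The probability of insolubility in each case is $1-\alpha_j$, where $\alpha_1$ is the star solubility probability (already determined), $\alpha_2$ is the triple-line solubility probability, and $\alpha_3 = 0$ for the triangle (so its insolubility contributes weight $\beta_3''\cdot 1 = \beta_3''$). Summing these contributions yields exactly
\[
  1-\alpha_5 = \beta_1''(1-\alpha_1) + \beta_2''(1-\alpha_2) + \beta_3'',
\]
which is the claimed identity. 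The logical content is just that these three configurations exhaust the insoluble (or potentially insoluble) reductions within the point-condition locus, and that their relative densities partition that locus correctly in the limit.

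\textbf{The main subtlety.} The step requiring the most care is verifying that the three configurations really do account for \emph{all} the insolubility conditional on the point condition, and that there is no overlap among them — each reduction is a star, a triple line, or a triangle in at most one way, and these are the only reductions lacking a smooth $\F_p$-point once we know $P_0$ is off the curve. This is where I would lean on the case enumeration of Section~\ref{sec:notpre} and the solubility dichotomy established in the outline: lines, smooth conics, and absolutely irreducible cubics always have smooth points, so the residual cases are genuinely just these three. One should also confirm that conditioning on the point condition does not disturb the independence needed to factor each term as (relative density)$\times$(insolubility probability); this holds because the solubility probability of a configuration depends only on its local shape near the relevant point, not on whether $P_0$ happens to lie off the curve. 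Once this bookkeeping is checked, the equation follows immediately, and it will later be combined with a second relation (analogous to the second lemma in the star case) to solve for $\alpha_2$ and $\alpha_5$ simultaneously.
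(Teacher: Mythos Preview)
Your proposal is correct and follows exactly the approach the paper intends: the paper states this lemma as ``the analogue of Lemma~\ref{lem:lem8}'' with no separate proof, and your argument supplies precisely that analogy, decomposing the insolubility probability conditional on the point condition into the star, triple-line, and triangle contributions weighted by the relative densities $\beta_j''$. One minor slip: your parenthetical ``(centred at or involving $P_0$)'' is backwards---the point condition forces $P_0$ \emph{off} the reduction, so the star is centred away from $P_0$ and the triple line avoids $P_0$---but you handle this correctly later when you argue that the solubility probabilities $\alpha_j$ are unaffected by this extra conditioning.
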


To obtain a second equation between $\alpha_2$ and~$\alpha_5$, we first
need to determine the probability of solubility given a refinement of
the triple line configuration.  For $j=1,2$, let $\nu_j$ denote the
probability of solubility for cubics~$C$ whose reduction is the triple
line $Y=0$ and that also satisfy the condition that the coefficient
of~$X^3$ has valuation exactly~$j$ and the coefficients of $X^2Y$,
$X^2Z$ have valuations at least~$j$ (in earlier notation: $c_3$ has a
triple root modulo~$p$, \,$v(c_0)=j$, $v(c_1)\ge j$, and $v(c_2)\ge 1$).
\begin{lem}
\label{lem:nu}
\[
  \nu_1 = \frac{2p^8 + p^6 - 3p^5 + 3p^4 - p^2 - 2}{3(p^8-1)};\qquad
  \nu_2 = \frac{3p^8 - 3p^7 + 3p^6 - p^4 - 2}{3(p^8-1)}.
\]
\end{lem}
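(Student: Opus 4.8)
The plan is to compute $\nu_1$ and $\nu_2$ by the same valuation-tracking recursion used in the previous subsection, reducing each case to either an immediate verdict (soluble/insoluble) or to a configuration whose solubility probability is already known, namely $\alpha_4$, $\alpha_1$, or again one of the $\nu_j$. Concretely, I would write the cubic in the adapted form $C = c_0 X^3 + c_1(Y,Z)X^2 + c_2(Y,Z)X + c_3(Y,Z)$, now with the triple line taken to be $Y=0$ rather than $X=0$, so that $c_3$ is a cube (a triple root modulo $p$) instead of irreducible; this is the only structural difference from Section~\ref{sec:alpha1}, and it changes which reductions are immediately soluble.

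First I would set up $\nu_j$ for $j=1,2$ as prescribed by the hypotheses: $v(c_0)=j$, $v(c_1)\ge j$, $v(c_2)\ge 1$, with $c_3$ having a triple root mod $p$. Solubility again forces the reduced point to lie on $Y=0$, so I would track how substituting $pY,pZ$ (and dividing by the appropriate power of $p$) transforms the valuation vector, branching on the valuations of the $c_j$ exactly as in the previous lemma. At each branch the probability of a given valuation increment is $1-1/p$, $1/p$, $1-1/p^2$, $1/p^2$, etc., according to how many coefficients the relevant form has, and each branch terminates either in a smooth $\F_p$-point (probability of that sub-case contributes $1$), in an empty reduction (contributes $0$), or in a cubic matching the definition of $\alpha_4$, $\alpha_1$, or $\nu_{j'}$. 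I expect the $\nu_1$ and $\nu_2$ recursions to reference each other and the already-solved $\alpha_1,\alpha_4$, producing a small linear system; since $\alpha_1$ and $\alpha_4$ are now known explicitly, the system should close and yield the stated rational functions with denominator $3(p^8-1)$.

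The main obstacle is getting the case analysis exactly right: with a triple root in $c_3$, after one substitution the reduced cubic typically acquires a repeated linear factor, so one cannot immediately declare solubility as in the simple-root situation, and the recursion digs deeper than before. I would have to be careful about which residual configurations genuinely reproduce a $\nu_j$ (same triple-line shape, with shifted valuation conditions) versus which have degenerated into a fresh triple-line-at-$X=0$ situation governed by $\alpha_4$ or a star governed by $\alpha_1$. Keeping the valuation bookkeeping consistent under the repeated $(Y,Z)\mapsto(pY,pZ)$ scalings — and correctly counting the $1/p$ versus $1/p^2$ branch probabilities according to the number of coefficients in each $c_j$ — is where errors would creep in. The appearance of the factor $3$ in the denominator signals that at some branch the relevant sub-probability is a binary-cubic count (the line condition $\beta_4$ or $\alpha_4$), so I would expect $\alpha_4$ to enter both $\nu_1$ and $\nu_2$, and I would verify the final expressions by checking the consistency relation that these $\nu_j$ must satisfy with $\alpha_2$ in the subsequent equation linking $\alpha_2$ and $\alpha_5$.
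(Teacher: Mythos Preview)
Your overall strategy --- valuation bookkeeping with substitutions and branching --- is the right one, but the plan as stated contains two concrete mis-expectations that would derail the execution.

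First, the substitution. Since the triple line is $Y=0$, a solution must have $y\equiv 0\pmod p$ but there is no constraint on $z$; so you substitute $pY$ for $Y$ only, not $pY,pZ$ together. After dividing by $p$, the reduction becomes a \emph{monic} binary cubic in $X,Z$ (the $X^3$ coefficient is a unit because $v(c_0)=1$ in the $\nu_1$ case). The relevant probabilities are therefore $\sigma_1$ and $\tau_1$ from Corollary~\ref{cor:sigma1tau1}, not the binary-form probabilities $\sigma,\tau$.

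Second, and more importantly, the recursion does \emph{not} feed back into $\alpha_1$ or $\alpha_4$. In the branch where the monic binary cubic in $X,Z$ is irreducible (no simple $\F_p$-root, no triple root), the cubic is outright insoluble: irreducibility forces $x\equiv z\equiv 0\pmod p$, which is forbidden since $Y$ has already been scaled. So that branch contributes~$0$, not $\alpha_4$. The only non-terminal branch is the triple-root case (probability~$\tau_1$), and shifting that root to $0$ lands you exactly in the $\nu_2$ configuration. Thus $\nu_1=\sigma_1+\tau_1\nu_2$. A parallel analysis starting from the $\nu_2$ valuations (this time substituting $pX$ for $X$) yields, after a few $1/p$ and $1/p^2$ branches, $\nu_2=(1-1/p)+(1/p^2)\bigl((1-1/p^2)+(1/p^2)(\sigma_1+\tau_1\nu_1)\bigr)$. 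The system in $\nu_1,\nu_2$ is closed and linear; solving it gives the stated formulas. The factor $3$ in the denominator comes from $\sigma_1=\tfrac{2(p^2-1)}{3p^2}$, not from $\alpha_4$ or $\beta_4$.
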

\begin{proof}
  Let $\sigma$ and $\tau$ denote the probabilities that a binary cubic
  over $\F_p$ has a simple root over $\F_p$ or a triple root,
  respectively, as in Corollary~\ref{cor:sigmatau}.  The corresponding
  probabilities for monic cubic polynomials are denoted $\sigma_1$ and
  $\tau_1$, respectively, as in Corollary~\ref{cor:sigma1tau1}.

  We arrange the~$10$ coefficients of the ternary cubic form~$C$ in a
  triangle with the $Z^3$-coefficient at the top, the
  $X^3$-coefficient at bottom left and $Y^3$-coefficient at bottom
  right, and indicate their valuations using the same
  equality/inequality notation as before.  The condition on a member $C$ in the set 
  of cubics whose density is measured by $\nu_1$ is then expressed by
\[
\begin{matrix}
Z^3 & \ge1 &      &      &      & \\
    & \ge1 & \ge1 &      &      & \\
    & \ge1 & \ge1 & \ge1 &      & \\
X^3 & =1   & \ge1 & \ge1 & =0   & Y^3
\end{matrix}
\]
Any solution must have $y\equiv0\pmod{p}$, so we substitute $pY$
for~$Y$ and divide by~$p$ to obtain
\[
\begin{matrix}
Z^3 & \ge0 &      &      &      & \\
    & \ge0 & \ge1 &      &      & \\
    & \ge0 & \ge1 & \ge2 &      & \\
X^3 &   =0 & \ge1 & \ge2 & =2   & Y^3
\end{matrix}
\]
Now the reduction is a binary cubic in~$X,Z$ with unit $X^3$
coefficient.  If it has a simple root in~$\F_p$
(probability~$\sigma_1$), it lifts to a $p$-adic root and~$C$ is
soluble with $y=0$.  Otherwise, $C$ is insoluble unless there is a
triple root (probability~$\tau_1$), since otherwise we could force
$x\equiv z\equiv0\pmod{p}$.  Given a triple root, we can shift the
root to~$0$, so that the binary cubic is a constant times~$X^3$.  Now
the valuations are
\[
\begin{matrix}
Z^3 & \ge1 &      &      &      & \\
    & \ge1 & \ge1 &      &      & \\
    & \ge1 & \ge1 & \ge2 &      & \\
X^3 &   =0 & \ge1 & \ge2 & =2   & Y^3
\end{matrix}
\]

The probability of solubility in this case is $\nu_2$. Tracing through
the arguments so far we have $\nu_1 = \sigma_1 + \tau_1 \nu_2$.  We
replace $X$ by $pX$ and divide through by~$p$:
\[
\begin{matrix}
Z^3 & \ge0 &      &      &      & \\
    & \ge1 & \ge0 &      &      & \\
    & \ge2 & \ge1 & \ge1 &      & \\
X^3 &   =2 & \ge2 & \ge2 & =1   & Y^3
\end{matrix}
\]
If the coefficient of $YZ^2$ has valuation~$0$ (with
probability~$1-1/p$), then the reduction is a binary cubic in~$Y,Z$
with a simple root, so $C$ is soluble.  Otherwise, $C$ reduces to a
multiple of~$Z^3$, but since $z\equiv0\pmod{p}$ is not allowed,
solubility requires that the coefficient of~$Z^3$ also has valuation
at least~$1$.  So we have insolubility with probability~$1/p-1/p^2$
and otherwise we can divide through by~$p$ to obtain
\[
\begin{matrix}
Z^3 & \ge0 &      &      &      & \\
    & \ge0 & \ge0 &      &      & \\
    & \ge1 & \ge0 & \ge0 &      & \\
X^3 &   =1 & \ge1 & \ge1 & =0   & Y^3
\end{matrix}
\]
If the coefficients of $XYZ$ and $XZ^2$ are not both zero modulo~$p$ (probability
$1-1/p^2$), then we have solubility since the reduction is either
absolutely irreducible or has a simple linear factor over~$\F_p$.
Otherwise (probability $1/p^2$) we have
\[
\begin{matrix}
Z^3 & \ge0 &      &      &      & \\
    & \ge1 & \ge0 &      &      & \\
    & \ge1 & \ge1 & \ge0 &      & \\
X^3 &   =1 & \ge1 & \ge1 & =0   & Y^3
\end{matrix}
\]
where the reduction is a binary cubic in $Y,Z$ with unit $Y^3$
coefficient.  We have solubility if it has a simple $\F_p$-root
(probability~$\sigma_1$), otherwise insolubility unless it has a
triple root (probability~$\tau_1$), in which case the valuations are
exactly as at the start, where solubility has probability~$\nu_1$.

Tracing through the above, we see that
\[
\nu_2 = (1-1/p)\cdot1 +
 (1/p^2)\cdot\bigl((1-1/p^2)\cdot1 + (1/p^2)\cdot(\sigma_1 + \tau_1\nu_1)\bigr).
\]
Recalling that $\nu_1 = \sigma_1 + \tau_1 \nu_2$, we may now solve
for both~$\nu_1$ and~$\nu_2$.
\end{proof}

The second equation linking $\alpha_5$ and $\alpha_2$ will involve the
above quantities~$\nu_1$ and $\nu_2$, and uses an argument similar to the
one used in the star case.

\begin{lem}\label{lem:eq2}
$ \alpha_2 = \sigma + \tau\nu_2 + (1/p^4) \bigl((1-1/p^3)
 + (1/p^3) (\sigma + \tau\nu_1 + (1/p^4)\alpha_5)\bigr).$
\end{lem}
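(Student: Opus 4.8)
The plan is to derive Lemma~\ref{lem:eq2} by the same valuation-tracking argument used for $\alpha_1$ in the star case, but now applied to a cubic whose reduction is a triple line, which I take without loss of generality to be $Y=0$. Writing $C = c_0X^3 + c_1(X,Z)\cdots$ is not quite the right bookkeeping here; instead I would arrange the ten coefficients in the same triangular valuation diagram used in the proof of Lemma~\ref{lem:nu}, with the triple line $Y=0$ imposing that every coefficient except possibly those of the monomials in $X,Z$ alone (i.e.\ $X^3$, $X^2Z$, $XZ^2$, $Z^3$) is divisible by $p$. The first branch point is the behaviour of the binary cubic in $X,Z$ obtained by reducing mod $p$: if it has a simple $\F_p$-root (probability $\sigma$ from Corollary~\ref{cor:sigmatau}) then $C$ is soluble, and if it has a triple root (probability $\tau$) we are exactly in the configuration whose solubility probability is $\nu_2$ by Lemma~\ref{lem:nu}. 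This accounts for the leading $\sigma + \tau\nu_2$.

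The remaining mass is the complementary event, of probability $1/p^4$, where the $X,Z$-reduction is \emph{not} a binary cubic with a simple or triple root but instead has all of its $X^3,X^2Z,XZ^2,Z^3$ coefficients divisible by $p$ (this is the $1/p^4$ factor: four coefficients forced into $p\Z_p$). In that event I substitute $pY$ for $Y$ (equivalently, observe $y\equiv 0$ is forced and rescale) and divide through by $p$, after which one reads off a new valuation diagram. Within this branch I expect a further dichotomy of probability $1-1/p^3$ versus $1/p^3$: with probability $1-1/p^3$ one of three remaining coefficients is a unit producing a simple linear factor or an absolutely irreducible reduction, hence solubility (the $(1-1/p^3)$ term), while with probability $1/p^3$ those three coefficients all vanish mod $p$ and one rescales again. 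After that rescaling the configuration is a triple line with its $X$-monomials forced, giving once more a factor $\sigma + \tau\nu_1$ for the simple/triple root split on the new binary cubic in $X,Z$, plus a residual event of probability $1/p^4$ that returns an arbitrary cubic satisfying the point condition, whose solubility probability is by definition $\alpha_5$. Tracing the tree and multiplying the conditional probabilities along each path yields exactly the stated identity.

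The main obstacle is getting the combinatorics of the valuation diagrams exactly right at each substitution, in particular correctly identifying which coefficients are forced to be divisible by $p$ after each rescaling and hence pinning down the probabilities $1/p^4$, $1-1/p^3$, $1/p^3$, and the final $1/p^4$. These powers encode how many of the ten coefficients are constrained at each stage, and an off-by-one in the count of constrained coefficients would change an exponent and break the recursion. The reason $\nu_1$ appears in the inner term while $\nu_2$ appears in the outer term (rather than the reverse) is the parity of how many times $X^3$ has been divided by $p$ relative to the other variables along that branch, so I would track the $X^3$-valuation especially carefully to distinguish the $\nu_1$ and $\nu_2$ regimes. Once the tree is laid out, the identity itself is immediate from summing over root-to-leaf paths, so the computation is routine and no genuinely new idea beyond the star-case method is needed; the value of $\alpha_2$ (and thence $\alpha_5$) then follows by combining this lemma with Lemma~\ref{lem:eq1} and the already-determined $\alpha_1$.
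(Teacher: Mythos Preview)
Your tree of cases and the probabilities attached to each branch match the paper's argument, so the approach is right. There is, however, a concrete error in your opening setup that would derail the computation if followed literally.

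If the reduction is the triple line $Y=0$, then $\overline{C}$ is a nonzero scalar multiple of $Y^3$: the \emph{only} unit coefficient is that of $Y^3$, while all nine others---including those of $X^3$, $X^2Z$, $XZ^2$, $Z^3$---lie in $p\Z_p$. Your description has this backwards. Consequently there is no nontrivial ``binary cubic in $X,Z$ obtained by reducing mod~$p$'' at the initial stage; it is identically zero. The correct first move (which the paper makes, with the roles of $X$ and $Y$ swapped) is to note that any solution has $y\equiv 0\pmod p$, substitute $Y\mapsto pY$, and divide through by~$p$; only \emph{after} this do the four $X,Z$-only coefficients become generic in~$\Z_p$, and one then branches on the resulting binary cubic into the $\sigma$, $\tau\nu_2$, irreducible (insoluble), and identically-zero-mod-$p$ (probability $1/p^4$) cases. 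You have placed this substitution one level too deep. Note also that the $1/p^4$ event is not ``the complementary event'' to simple-or-triple-root: that complement has probability $1-\sigma-\tau$, and most of it is the irreducible case, which contributes~$0$; the $1/p^4$ is specifically the event that the binary cubic vanishes mod~$p$, whereupon one divides by $p$ (no further substitution). With the first substitution moved to the top and these descriptions corrected, the rest of your outline---the $(1-1/p^3)$ versus $1/p^3$ split on the three coefficients of~$c_2$, the inner $\sigma + \tau\nu_1$ branching, and the final $1/p^4$ landing on the point condition---coincides with the paper's proof.
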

\begin{proof}
  We may assume that the triple line modulo~$p$ is $X=0$.  
Using the same notation as before, let us write 
\[
   C = c_0X^3 + c_1(Y,Z)X^2 + c_2(Y,Z)X + c_3(Y,Z)
\]
where each $c_j$ is a binary form of degree~$j$; then the valuations of the
$c_j$ satisfy
\[
     {}=0 \qquad {}\ge1 \qquad {}\ge1 \qquad {}\ge1.
\]
Solutions must have $x\equiv0\pmod{p}$ so we replace $X$ by $pX$ and
divide by~$p$ to obtain
\[
     {}=2 \qquad {}\ge2 \qquad {}\ge1 \qquad {}\ge0,
\]
so that $C$ reduces to a binary cubic in $Y,Z$.

With probability~$\sigma$ this binary cubic has a simple $\F_p$-root
and~$C$ is soluble.  With probability~$\tau$ it has a triple root (without loss 
of generality the reduction is~$Z^3$); in this case,
$C$ is soluble with probability~$\nu_2$.  Otherwise, for solubility we
require $v(c_3)\ge1$, as otherwise solutions would have $y\equiv
z\equiv0\pmod{p}$, which is not allowed since we have already
scaled~$X$.  So with probability $1/p^4$ we have the valuations satisfying
\[
     {}=2 \qquad {}\ge2 \qquad {}\ge1 \qquad {}\ge1
\]
and dividing through by~$p$ we obtain
\[
     {}=1 \qquad {}\ge1 \qquad {}\ge0 \qquad {}\ge0.
\]
Now, with probability $1-1/p^3$ we have $v(c_2)=0$ and solubility
since 
the reduction is either absolutely irreducible or has a simple linear
factor over~$\F_p$.  Otherwise (probability $1/p^3$), we have
$v(c_2)\ge1$:
\[
     {}=1 \qquad {}\ge1 \qquad {}\ge1 \qquad {}\ge0.
\]
As at the start, we have solubility if $c_3$ has a simple $\F_p$-root
(probability~$\sigma$), solubility with probability~$\nu_1$ if it has a
triple root (probability~$\tau$), and otherwise
for solubility we require $v(c_3)\ge1$ (probability~$1/p^4$) in which
case we divide through by~$p$.  The latter gives an arbitrary cubic subject
to the point condition, where the probability of solubility
is~$\alpha_5$.

Tracing through the above steps, we see that 
\[
 \alpha_2 = \sigma\cdot1 + \tau\nu_2 + (1/p^4) \left((1-1/p^3)\cdot1
 + (1/p^3)\cdot(\sigma\cdot1 + \tau\nu_1 + (1/p^4)\alpha_5)\right),
\]
which is the equation stated.
\end{proof}

Using the equations given in Lemmas~\ref{lem:eq1} and~\ref{lem:eq2},
together with the known value of~$\alpha_1$, we can solve for
$\alpha_2$, obtaining the following.
\begin{prop}
  \[ \alpha_2 = 1 - \frac{p^{14} + 3 p^{11} + p^8 + 2 p^7 + p^5 + p^4
    + 1} { 3(p^2+1)(p^2+p+1)(p^4+1)(p^6+p^3+1) }. \]
\end{prop}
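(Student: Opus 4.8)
The final statement computes a closed form for $\alpha_2$, the probability of solubility given the triple-line reduction. By the time we reach this proposition, we have assembled three ingredients: the known value of $\alpha_1$ (from the earlier proposition), the two quantities $\nu_1,\nu_2$ (from Lemma~\ref{lem:nu}), and two linear relations among $\alpha_1,\alpha_2,\alpha_5$ --- namely Lemma~\ref{lem:eq1}, which reads $1-\alpha_5 = \beta_1''(1-\alpha_1) + \beta_2''(1-\alpha_2) + \beta_3''$, and Lemma~\ref{lem:eq2}, which expresses $\alpha_2$ in terms of $\sigma,\tau,\nu_1,\nu_2$ and $\alpha_5$. The plan is purely to eliminate $\alpha_5$ between these two equations and substitute all known values.

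Let me sketch how I'd prove it...

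**The plan.** First I would treat Lemmas~\ref{lem:eq1} and~\ref{lem:eq2} as a linear system in the two unknowns $\alpha_2$ and $\alpha_5$, with $\alpha_1$ now a known rational function of $p$. The relevant $\beta$-values are supplied by Proposition~\ref{prop:betas}: $\beta_1'' = (p+1)^2(p-1)/3p^7$, $\beta_2'' = 1/p^7$, and $\beta_3'' = (p+1)(p-1)^2/3p^6$. The quantities $\sigma$ and $\tau$ are the binary-cubic probabilities from Corollary~\ref{cor:sigmatau}, namely $\sigma = \frac{1}{3}(p^2-1)(2p+1)/p^3$ and $\tau = (p^2-1)/p^4$, while $\nu_1,\nu_2$ come from Lemma~\ref{lem:nu}. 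The cleanest route is to first solve Lemma~\ref{lem:eq1} for $\alpha_5$ as an affine function of $\alpha_2$ (since $\alpha_1$ is already known), then substitute this into Lemma~\ref{lem:eq2}, which after clearing the $1/p^4$ and $1/p^3$ factors becomes a single linear equation in $\alpha_2$ alone.

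**Carrying it out.** From Lemma~\ref{lem:eq1} one gets $\alpha_5 = 1 - \beta_1''(1-\alpha_1) - \beta_3'' - \beta_2''(1-\alpha_2)$, so that $\alpha_5$ is linear in $\alpha_2$ with the small coefficient $\beta_2'' = 1/p^7$ multiplying $\alpha_2$. Plugging this into Lemma~\ref{lem:eq2}, the term $(1/p^4)(1/p^3)(1/p^4)\alpha_5 = \alpha_5/p^{11}$ contributes a term in $\alpha_2$ of size $1/p^{18}$, while the left side contributes $\alpha_2$ itself; collecting gives $\alpha_2(1 - 1/p^{18}) = R(p)$ for an explicit rational function $R$ built from $\sigma,\tau,\nu_1,\nu_2,\alpha_1$ and the $\beta''_j$. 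Solving, $\alpha_2 = R(p)/(1-p^{-18})$, and the final step is to verify this simplifies to the stated expression. I expect the denominator to emerge as $3(p^2+1)(p^2+p+1)(p^4+1)(p^6+p^3+1)$ after the common factor $p^8-1$ appearing in $\alpha_1,\nu_1,\nu_2$ combines with the $1-p^{-18}$ factor; note $(p^8-1)(p^{18}-1)$ shares the cyclotomic pieces that produce $(p^2+1)$, $(p^4+1)$, $(p^6+p^3+1)$, and $(p^2+p+1)$, so massive cancellation is expected.

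**Main obstacle.** The only real difficulty is bookkeeping: this is a routine but lengthy rational-function simplification where the intermediate expressions have denominators as large as $p^{18}$, and the final answer is a ratio of a degree-$14$ numerator over a degree-$18$ denominator that collapses after factoring out cyclotomic-type common factors. The hard part will be organizing the algebra so the cancellations are transparent --- in practice one either performs this in a computer algebra system or carefully factors each cyclotomic block $(p^8-1) = (p-1)(p+1)(p^2+1)(p^4+1)$ and tracks which factors survive. No conceptual step is involved beyond solving a $2\times 2$ linear system and simplifying; the result is then simply stated.
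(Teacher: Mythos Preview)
Your proposal is correct and follows exactly the paper's approach: the paper's proof is the single sentence ``Using the equations given in Lemmas~\ref{lem:eq1} and~\ref{lem:eq2}, together with the known value of~$\alpha_1$, we can solve for~$\alpha_2$,'' and you have simply spelled out the elimination of $\alpha_5$ and the ensuing rational-function simplification in more detail than the paper does.
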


\subsection{Conclusion}

The probability of insolubility of a ternary cubic with coefficients
in~$\Q_p$ is therefore
\begin{align*}
  1-\rho(p)  &=    \frac{p^{10}}{p^{10}-1}
  (\beta_1(1-\alpha_1) + \beta_2(1-\alpha_2) + \beta_3) \\
             &=   f(p)/g(p),
\end{align*}
where $f$ and $g$ are the polynomials given in the statement of
Theorem~\ref{mainlocal}.

\section{The probability that a random plane cubic over $\Q_p$ has points over the maximal unramified extension of $\Q_p$}\label{sec:unr}

Let $\Q_p^{\nr}$ denote the maximal unramified extension of $\Q_p$. 
 
\begin{thm}\label{mainlocalnr}
The probability that a random plane cubic curve over $\Q_p$ has a
$\Q_p^{\nr}$-rational point is given by
\[ \rho^{\nr}(p) = 1-\frac{p^{11}(p-1)(p^2-1)(p^3-1)}{(p^8-1)(p^9-1)(p^{10}-1)}.
\]
\end{thm}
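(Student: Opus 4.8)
The plan is to adapt the entire recursive framework of the $\Q_p$-case (Theorem~\ref{mainlocal}) to the setting of $\Q_p^{\nr}$, exploiting the fact that the only structural change is \emph{which finite-field configurations force insolubility}. Over $\Q_p^{\nr}$ the residue field is $\overline{\F_p}$, so every geometrically reducible or singular reduction acquires points: three lines conjugate over $\F_{p^3}$ (whether star or triangle) now split over the residue field and yield smooth points, and so do conjugate conic--line products. The \emph{only} reductions that can fail to lift to a $\Q_p^{\nr}$-point are those with no smooth component at all over the algebraic closure, namely the triple line; even a double-line-plus-line configuration $L_1 L_2^2$ has the reduced component $L_1$ giving a smooth point. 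Thus I expect the first step to be a careful re-examination of the case enumeration in Section~\ref{sec:notpre}, concluding that $1-\rho^{\nr}(p)$ is supported entirely on cubics whose reduction is a triple line.

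Granting that reduction, the main computation is to recompute the triple-line solubility probability over $\Q_p^{\nr}$, i.e.\ the analogue of $\alpha_2$. I would set up the same substitution recursion as in Lemmas~\ref{lem:nu} and~\ref{lem:eq2}, tracking valuations of the binary forms $c_0,c_1,c_2,c_3$ after scaling $X \mapsto pX$, but with two simplifications coming from the larger residue field. First, the finite-field inputs $\sigma,\tau$ (and $\sigma_1,\tau_1$) must be replaced by their limits or analogues over $\overline{\F_p}$: a binary cubic over an algebraically closed field always has a root, so the ``simple root'' probability effectively absorbs most cases, and insolubility can only propagate through the triple-root branch. Second, the branches that in the $\Q_p$-proof led to the line/point conditions $\alpha_4,\alpha_5$ now feed into configurations that are automatically soluble over $\Q_p^{\nr}$ once any geometric component is reduced, so the recursion should close on a single unknown (the unramified triple-line probability) rather than the coupled system $(\alpha_1,\alpha_2,\alpha_4,\alpha_5)$.

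Concretely, I would write $\delta$ for the probability that a primitive cubic over $\Z_p^{\nr}$ with triple-line reduction $X=0$ is soluble over $\Q_p^{\nr}$, and derive a self-referential equation: after forcing $x\equiv 0$, scaling, and reducing, the new cubic is soluble over $\Q_p^{\nr}$ unless its reduction is again a triple line, in which case we recurse. Because over $\overline{\F_p}$ every non-triple-line binary/ternary cubic configuration is soluble, the recursion should take the clean form $\delta = (1 - \kappa) \cdot 1 + \kappa \cdot \delta$ type structure with the recurrence multiplier being the probability of regenerating a triple line through the forced substitutions; solving the resulting linear equation (over the relevant $p$-power denominators, giving factors like $p^8-1$, $p^9-1$, $p^{10}-1$) should yield $\delta$ and hence $\rho^{\nr}(p) = 1 - \beta_2(1-\delta)\cdot\tfrac{p^{10}}{p^{10}-1}$, matching the stated rational function. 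The hard part will be bookkeeping the valuation pattern through the repeated $X\mapsto pX$ and $Y,Z\mapsto pY,pZ$ substitutions and correctly identifying, at each branch, exactly which residue-field configurations are soluble over $\overline{\F_p}$ versus which force a further reduction; in particular one must check that the double-line case $L_1L_2^2$ and all conjugate-line cases really do contribute nothing, since the clean final denominator $(p^8-1)(p^9-1)(p^{10}-1)$ is a strong consistency check that the recursion has been closed correctly.
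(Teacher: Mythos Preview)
Your overall strategy matches the paper's: it too observes that the star and triangle configurations become soluble over $\Q_p^{\nr}$ (so $\alpha'_1=0$ in its notation), leaving the entire insolubility contribution supported on the triple-line case, and concludes with $1-\rho^{\nr}(p)=\tfrac{p^{10}}{p^{10}-1}\beta_2\alpha'_2$, which is exactly your $\beta_2(1-\delta)\tfrac{p^{10}}{p^{10}-1}$.

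The gap is in your proposed shape of the recursion. A literal equation $\delta=(1-\kappa)\cdot 1+\kappa\cdot\delta$ forces $\delta=1$, i.e.\ universal $\Q_p^{\nr}$-solubility, which is false. The paper does \emph{not} collapse to a single unknown: it keeps the auxiliary quantities $\nu'_1,\nu'_2$ (the $\Q_p^{\nr}$-insolubility analogues of the $\nu_1,\nu_2$ in Lemma~\ref{lem:nu}), obtains $\nu'_1=p^4(p-1)/(p^8-1)$ and $\nu'_2=p^6(p-1)/(p^8-1)$ from that lemma's analogue, and then derives $\alpha'_2=\tau\nu'_2+p^{-7}(\tau\nu'_1+p^{-4}\beta''_2\alpha'_2)$ from the analogues of Lemmas~\ref{lem:eq1} and~\ref{lem:eq2}. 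What your sketch misses is that there is a genuine \emph{terminal} $\Q_p^{\nr}$-insoluble configuration---the critical-model valuation pattern (Cr) identified in Section~\ref{sec:unr}---which is reached inside the $\nu$-recursion and has no smooth point on its reduction over \emph{any} residue-field extension. It is this terminal state, not an infinite descent through triple lines, that makes $\nu'_1,\nu'_2$ (and hence $\alpha'_2$) nonzero. Once you trace the $\nu$-recursion carefully you will find this branch; it corresponds to the step in the proof of Lemma~\ref{lem:nu} where the reduction is a unit multiple of $Z^3$ and the relevant primitivity constraint forbids $z\equiv 0$.

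One smaller correction: the probabilities $\sigma,\tau,\sigma_1,\tau_1$ are \emph{not} replaced by $\overline{\F_p}$-analogues. The cubic still has $\Z_p$-coefficients and reduces over $\F_p$, so the branching probabilities are unchanged; what changes is only the \emph{outcome} assigned to the branch where the binary cubic is irreducible over $\F_p$ (insoluble over $\Q_p$, but now soluble over $\Q_p^{\nr}$ since it splits over $\overline{\F_p}$). Your next sentence already says this correctly, so this is just a matter of phrasing.
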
 
Note that the formula for the probability in Theorem~\ref{mainlocalnr} is much simpler than
that in Theorem~\ref{mainlocal}. 

The proof of Theorem~\ref{mainlocal} may be regarded as an algorithm for 
testing whether a plane cubic is locally soluble (i.e., has a $\Q_p$-point), where 
we are able to determine explicitly the probability of entering each step of the algorithm.
The algorithm terminates either when there is a smooth $\F_p$-point on the
reduction (in which case it lifts to a $\Q_p$-point by Hensel's lemma) or when
we reach one of the following four situations:

\begin{description}
\item[(I$_{3m}$)] The reduction $\overline{C}$ consists of three non-concurrent
lines conjugate over $\F_{p^3}$ (the triangle configuration). 
\item[(IV)] The cubic is $\GL_3(\Z_p)$-equivalent to a cubic of the form
\[
   C = c_0X^3 + c_1(Y,Z)X^2 + c_2(Y,Z)X + c_3(Y,Z)
\]
where each $c_j$ is a binary form of degree~$j$ and the
valuations of the $c_j$ satisfy
\[
     {}=1 \qquad {}\ge1 \qquad {}\ge1 \qquad {}=0
\]
with $c_3$ irreducible modulo~$p$.
\item[(IV$^*$)] As in (IV), except that the valuations satisfy
\[
     {}=2 \qquad {}\ge2 \qquad {}\ge1 \qquad {}=0.
\]
\item[(Cr)] The cubic is $\GL_3(\Z_p)$-equivalent to a cubic $C$ 
whose coefficients have valuations satisfying
\[
\begin{matrix}
Z^3 & =2 &      &      &      & \\
    & \ge2 & \ge2 &      &      & \\
    & \ge1 & \ge1 & \ge2 &      & \\
X^3 & =0   & \ge1 & \ge1 & =1   & Y^3
\end{matrix}
\]
\end{description}

In the first three cases the cubic is insoluble over $\Q_p$ but soluble
over $\Q_p^{\nr}$; it can be shown that the Jacobian is an elliptic curve
$E/\Q_p$ with Kodaira symbol I$_{3m}$, IV, or IV$^*$ as indicated, and 
Tamagawa number divisible by $3$. 
In the final case, the cubic is a critical model, in the terminology of
\cite[Definition~5.1]{CFS}. As noted in \cite{CFS}, critical models
are insoluble over $\Q_p^{\nr}$. 

The probability that a cubic is insoluble over $\Q_p$, denoted $f(p)/g(p)$
in Theorem~\ref{mainlocal}, may thus be written as a sum of four
terms, corresponding to the four situations above. This allows us to naturally adapt the proof of Theorem~\ref{mainlocal} to a proof also of Theorem~\ref{mainlocalnr}.

\begin{ProofOf}{Theorem~\ref{mainlocalnr}} Since the proof of 
Theorem~\ref{mainlocalnr} is similar to that of Theorem~\ref{mainlocal},
we only highlight the differences. Let $\alpha'_1$, $\alpha'_2$,
$\nu'_1$, $\nu'_2$ be the probabilities of insolubility over 
$\Q_p^{\nr}$, in the situations where we earlier wrote $\alpha_1$, $\alpha_2$,
$\nu_1$, $\nu_2$ for the probabilities of solubility over $\Q_p$. 
The analogue of Lemma~\ref{lem:nu} gives
\[ \nu'_1 \,=\, \frac{p^4(p-1)}{p^8-1}, \qquad \nu'_2 \,=\, \frac{p^6(p-1)}{p^8-1}. \]
We have $\alpha'_1 = 0$. The analogues of Lemmas~\ref{lem:eq1} 
and~\ref{lem:eq2} give
\[ \alpha'_2 \,=\, \tau \nu'_2 + \frac1{p^7}\left(\tau \nu'_1 + \frac1{p^4}
\beta''_2 \alpha'_2\right). \]
Substituting $\nu_1' = \nu_2'/p^2$ and solving for $\alpha'_2$ gives
\[ \alpha'_2 \,=\, \frac{p^5(p^2-1)}{p^9-1} \nu'_2 \,=\, 
\frac{p^{11}(p-1)(p^2-1)}{(p^8-1)(p^9-1)}. \]
Finally, the probability of insolubility over $\Q_p^{\nr}$ is
\[ \frac{p^{10}}{p^{10}-1}\beta_2 \alpha'_2 \,=\, 
\frac{p^{11}(p-1)(p^2-1)(p^3-1)}{(p^8-1)(p^9-1)(p^{10}-1)}.\]
\end{ProofOf}

By the same argument as in \cite{PV}, we see that Theorem~\ref{mainlocalnr} implies that the probability $\rho^{\nr}$ that a random plane cubic curve over $\Q$ has a point over $\Q_p^{\nr}$ for all primes $p$ is given by 
$$\rho^{\nr}=\prod_p \rho^{\nr}(p) = \prod_p\left(1-\frac{p^{11}(p-1)(p^2-1)(p^3-1)}{(p^8-1)(p^9-1)(p^{10}-1)} \right)\approx
99.96676\%.$$
By \cite[Thm.\ 3.5]{CFS}, this may be interpreted as the probability that a
plane cubic curve over $\Q$ has minimal discriminant the same as that of
its Jacobian elliptic curve.

\subsection*{Acknowledgments}
We thank Michael Stoll and Damiano Testa for helpful conversations.
The first author was supported by a Simons Investigator Grant and NSF
grant~DMS-1001828.

\end{document}